\providecommand{\U}[1]{\protect\rule{.1in}{.1in}}
\newtheorem{theorem}{Theorem}[section]
\newtheorem{proposition}[theorem]{Proposition}
\newtheorem{corollary}[theorem]{Corollary}
\newtheorem{remarks}[theorem]{Remarks}
\newtheorem{questions}[theorem]{Questions}
\numberwithin{equation}{section}
\title[Reflexivity and non-weakly null maximizing sequences]{Reflexivity and non-weakly null maximizing sequences}
\author[R. M. Aron]{Richard M. Aron}
    \address{ Department of Mathematical
Sciences\\ Kent State University\\ Kent,  Ohio 44242,  USA}
 \email{aron@math.kent.edu}
 \author[D. Garc\'ia]{Domingo Garc\'ia}
\address{Departamento de An\'{a}lisis Matem\'{a}tico,
Universidad de Valencia, Doctor Moliner 50, 46100 Burjasot (Valencia), Spain}
\email{domingo.garcia@uv.es}
\author[D. Pellegrino]{Daniel Pellegrino}
\address{Departamento de Matem\'{a}tica, UFPB, Jo\~{a}o Pessoa, PB, Brazil}
\email{dmpellegrino@gmail.com}
\author[E. V. Teixeira]{Eduardo V. Teixeira}
	\address{
	Department of Mathematics,
	\indent University of Central Florida,
	\indent Orlando, FL, USA 32816}
\email{Eduardo.Teixeira@ucf.edu}
\thanks{The  first and
second authors were supported by  MINECO and FEDER project  MTM2017-83262-C2-1-P. The second  author was also supported by  PROMETEO/2017/102 of the Generalitat Valenciana. The third author was supported by CNPq-Grant 307327/2017-5. The fourth author was supported by FUNCAP/CNPq/PRONEX Grant 00068.01.00/15}
\subjclass[2010]{Primary 46B20,  Secondary 46B25, 46G25}
\keywords{Maximizing sequence,  norm attaining,   Banach space}
\begin{document}

\begin{abstract}
We introduce and explore a new property related to reflexivity that plays an
important role in the characterization of norm attaining operators. We also present an
application to the theory of compact perturbations of linear operators and characterize
norm attaining scalar-valued continuous $2$-homogeneous polynomials on $\ell_{2}$.
\end{abstract}

\maketitle

\section{Introduction}

The concept of reflexivity plays an important role in functional analysis and is closely
related to the theory of norm attaining operators. A classical result due to R. C. James
\cite{j2, James} asserts that a (real or complex) Banach space $E$ is reflexive if and
only if all bounded linear functionals on $E$ attain their norms, i.e., the norm of a bounded
functional, which is a supremum, is in fact a maximum. Since then, the relation between
reflexivity and norm attainment has attracted the interest of many mathematicians
(see, e.g., \cite{ak, CheLou, G-P, J-M, KimLee}).

Let ${E}$ and ${F}$ be Banach spaces over $\mathbb{K}=\mathbb{R}$ or $\mathbb{C}$ and
$\mathcal{L}(E,F)$ be the space of all bounded linear operators from $E$ into $F$. The
topological dual of $E$ will be represented by $E^{\ast}$. A bounded linear operator
${T}\colon E\rightarrow F$ is said to {\em{attain its norm}}
if there exists a
norm one vector $x\in E$ so that
\[
\Vert{T(x)}\Vert=\Vert{T}\Vert.
\]

A {\em{maximizing sequence}} for $T\in\mathcal{L}(E,F)$ is a sequence $(x^{n})_{n=1}^\infty$ in $E$ with
$\Vert x^{n}\Vert=1$ for all $n$, so that $\Vert T(x^{n})\Vert$ converges to $\Vert
T\Vert$ as $n\rightarrow \infty$. In this note we introduce and explore the following
property:

\bigskip

\textbf{Weak Maximizing Property. } A pair of Banach spaces $\left( E,F\right)  $ is said
to have the {\it weak maximizing property} if for any bounded linear operator
$T:E\rightarrow F$, the existence of a non-weakly  null maximizing sequence for $T$
implies that $T$ attains its norm. When $E=F$ we say that $E$ has the {\it weak maximizing
property}.

\bigskip

In some sense this new property extends the notion of reflexivity, since  the pair $\left(  E,F\right)$ has the weak maximizing property for
some $F\neq\left\{ 0\right\}$ if and only if $E$ is reflexive (see Corollary \ref{uuio}).
For linear operators, the weak maximizing property should be explored for infinite dimensional
target spaces $F$, since every maximizing sequence for a non-zero, finite rank bounded linear operator is non-weakly null. On the other hand, for homogeneous polynomials
(see Section 3) the scalar case deserves attention since the analogue of the weak
maximizing property does not characterize reflexive spaces.

We will show that, like reflexivity, this new concept is strongly connected to the theory of norm attaining operators.

The following  result can be regarded as the first connection between the weak maximizing property and norm attaining operators.

\begin{theorem}(\cite[Theorem 1]{PT})
\label{ptp} Let $1<p<\infty$, $1\leq q<\infty$ and $T\colon\ell_{p}%
\rightarrow\ell_{q}$ be a bounded linear operator. Then $T$ attains its norm
if, and only if, there exists a non-weakly null maximizing sequence for $T$.
In particular, $\left(  \ell_{p},\ell_{q}\right)  $ has the weak maximizing
property$.$
\end{theorem}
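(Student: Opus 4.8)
The plan is to prove the stated equivalence directly, observing first that only one implication carries content. If $T$ attains its norm at a unit vector $x_{0}$, then the constant sequence $(x_{0})_{n}$ is a maximizing sequence, and it is not weakly null, since a constant sequence is weakly null only when it is the zero sequence, while $x_{0}\neq 0$. This disposes of the ``only if'' direction, and the final ``in particular'' assertion is then immediate from the definition of the weak maximizing property. So the real task is the converse: assume $T$ admits a non-weakly null maximizing sequence $(x^{n})$, and, setting aside the trivial case, assume $\|T\|>0$.

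The first step is to extract a nonzero weak limit. Since $(x^{n})$ is not weakly null there are $\varphi\in\ell_p^{\ast}$, $\varepsilon>0$ and a subsequence with $|\varphi(x^{n_k})|\geq\varepsilon$; as $\ell_p$ is reflexive ($1<p<\infty$), this bounded subsequence has a further subsequence converging weakly to some $x$, and then $|\varphi(x)|\geq\varepsilon$, whence $x\neq 0$. Relabeling, we may assume $x^{n}\rightharpoonup x$, $\|x^{n}\|_p=1$, $\|Tx^{n}\|_q\to\|T\|$ and $x\neq 0$; write $a:=\|x\|_p\in(0,1]$. Note that $Tx^{n}\rightharpoonup Tx$ in $\ell_q$, because bounded operators are weak-to-weak continuous. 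The argument then splits according to whether $q\geq p$ or $q<p$.

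Suppose $q\geq p$ (so $p,q>1$). Weak convergence in $\ell_p$ and in $\ell_q$ forces coordinatewise convergence, so the discrete Brezis--Lieb lemma applies on both sides and gives $\|x^{n}-x\|_p^{p}\to 1-a^{p}$ and $\|Tx^{n}-Tx\|_q^{q}\to\|T\|^{q}-\|Tx\|_q^{q}$. Combining these with the elementary bound $\|Tx^{n}-Tx\|_q=\|T(x^{n}-x)\|_q\leq\|T\|\,\|x^{n}-x\|_p$ yields $\|T\|^{q}-\|Tx\|_q^{q}\leq\|T\|^{q}(1-a^{p})^{q/p}$, that is, $\|Tx\|_q^{q}\geq\|T\|^{q}\bigl(1-(1-a^{p})^{q/p}\bigr)$. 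Since $t^{s}+(1-t)^{s}\leq 1$ for $s\geq 1$ and $t\in[0,1]$ (apply this with $s=q/p$ and $t=1-a^{p}$), the right-hand side is at least $\|T\|^{q}a^{q}$; as also $\|Tx\|_q\leq\|T\|\,a$, we conclude $\|Tx\|_q=\|T\|\,a$, so $T$ attains its norm at $x/a$.

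If instead $1\leq q<p$, then by Pitt's theorem every bounded operator $\ell_p\to\ell_q$ is compact; since $\ell_p$ is reflexive, any maximizing sequence has a weakly convergent subsequence $x^{n}\rightharpoonup x$, compactness upgrades this to $Tx^{n}\to Tx$ in norm, hence $\|Tx\|_q=\|T\|$, and (using $\|x\|_p\leq 1$ exactly as above) $T$ attains its norm. Thus in this range every operator attains its norm and the equivalence is automatic. The step I expect to be the crux is the tightness in the case $q\geq p$: the two Brezis--Lieb mass-splitting identities must be balanced against the operator-norm bound in just the right way, and it is precisely the inequality $t^{q/p}+(1-t)^{q/p}\leq 1$, valid exactly because $q/p\geq 1$, that rules out a strict loss and pins $\|Tx\|_q=a\|T\|$. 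A subsidiary point worth care is that the hypothesis ``non-weakly null'' enters only to ensure $x\neq 0$; without it one might extract only $x=0$ and the argument would yield nothing.
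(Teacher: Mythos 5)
Your proposal is correct and follows essentially the same route as the paper: extract a nonzero weak limit from the non-weakly null maximizing sequence, split the $\ell_p$- and $\ell_q$-masses along that limit, and balance against $\|T(x^n-x)\|_q\le\|T\|\,\|x^n-x\|_p$ to force $\|Tx\|_q=\|T\|\,\|x\|_p$; the only cosmetic difference is that you invoke the discrete Brezis--Lieb lemma as a black box where the paper re-derives that splitting via an elementary pointwise inequality, and you make explicit the $q<p$ case via Pitt's theorem, which the paper handles the same way in Proposition~\ref{second-result}.
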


This paper is organized as follows. We believe that it is important to understand
the present limitations of our knowledge about the weak maximizing property. For this,
we begin Section 2 with a brief outline of the proof of Theorem \ref{ptp}. We then observe that a full
version of Theorem \ref{ptp} is true for more general $\ell_{p}$ spaces. This allows us
to deduce that $\left( \ell_{p}(\Gamma_{1}),\ell_{q}(\Gamma_{2})\right) $ has the weak maximizing property
whenever $1<p<\infty$, $1\leq q<\infty$, where
$\Gamma_{1}$ and $\Gamma_{2}$ are arbitrary index sets. In addition, an application of
this result is given in the context of compact perturbations of continuous linear operators.
Albeit simple, our application (see Proposition \ref{sepG}) reinforces the special role
played by the weak maximizing property and substantially improves
a result of J. Kover
\cite{quae} on compact perturbations of operators acting on Hilbert spaces. As a
consequence of James' characterization of reflexivity and the results
in this section, we
conclude that if $E$ has the weak maximizing property, then $E$ is reflexive. So, it seems natural to ask whether the reflexivity of $E$ characterizes the weak maximizing property for $E$.

\bigskip

Any answer to this problem will lead us in interesting directions. A positive
answer will show a new feature of reflexivity in the theory of norm attaining
operators. On the other hand, a negative answer will highlight the importance
of studying the weak maximizing property.\\

In the final section we investigate possible extensions of Theorem \ref{ptp} to the
setting of homogeneous polynomials. Recall that a continuous $m$-homogeneous
polynomial $P\colon E\longrightarrow F$ is a mapping so that%
\[
P(x)=T_{P}(x,...,x)
\]
for some (unique) symmetric continuous $m$-linear mapping
$T_{P}:{E\times \cdots\times   E}\rightarrow F$.
The space of all continuous $m$-homogeneous polynomials (with the $\sup$
norm) $P:E\rightarrow F$ is denoted by $\mathcal{P}(^{m}E;F)$. The concept of maximizing
sequence is naturally extended to polynomials and an unavoidable question is whether
an analogue of
Theorem \ref{ptp} is valid for polynomials. It is worth recalling that contrary to the
case of linear operators, compact polynomials on reflexive Banach spaces are not
necessarily norm attaining.

A pair of Banach spaces $\left( E,F\right)  $ is said
to have the {\it $m-$homogeneous polynomial weak maximizing property} $(m\in \mathbb{N})$ if for any continuous
$m$-homogeneous polynomial $P:E\rightarrow F$, the existence of a non-weakly null maximizing sequence for $P$
implies that $P$ attains its norm.

We will show by means of a counterexample that there is no analogue of Theorem \ref{ptp}
for polynomials of degree $m\geq3$. Surprisingly, for $m=2,$ $E=\ell_{2}$ and
$F=\mathbb{K}$ a satisfactory extension of Theorem \ref{ptp} is valid. In other words, we
show in Theorem 3.1 that a $2-$homogeneous polynomial $P:\ell_2 \to \mathbb{K}$ attains its norm if and
only if there is a non-weakly null sequence $(x^n)_{n=1}^\infty \subset \ell_2$ of unit vectors such that
$|P(x^n)| \to \|P\|$, i.e., the pair $\left(  \ell_{2},\mathbb{K} \right)  $ has the
$2-$homogeneous polynomial weak maximizing property.

\section{Weak maximizing property for pairs of $\ell_{p}(\Gamma)$ and an application}

We begin this section with the promised sketch of the proof of the non-trivial direction of Theorem \ref{ptp}:

\begin{proof}[Sketch of the proof of  Theorem \ref{ptp}.]
Let $(u^n)_{n=1}^\infty \subset
\ell_p$ be a non-weakly null maximizing sequence for $T.$ By reflexivity, we may assume that $(u^n)_{n=1}^\infty$ converges weakly to
$u \neq 0.$ One can choose a further subsequence, still calling it $(u^n)_{n=1}^\infty,$ with the property that the sequence
$\big ( |T(u^n)(e_i) - T(u)(e_i)|^{q-1} \big )_{i=1}^\infty   $ converges weakly to $0$ in $\ell_{\frac{q}{q-1}}.$\\

Next, by calculus one shows that for fixed $r > 1$ and $\varepsilon > 0,$ there is a positive constant $C_\varepsilon$ such that
the following inequality holds for every $x \in \mathbb{R}$

\begin{equation}\label{real}
\big | |x|^r - |x-1|^r - 1 \big | \leq C_\varepsilon |x-1|^{r-1} + \varepsilon^r + \sup_{|t-1| \leq \varepsilon} |\ |t|^r -1 |.
\end{equation}

For fixed $n,$ by applying estimate $\eqref{real}$ to $x_i = \frac{T(u^n)(e_i)}{T(u)(e_i)}$ whenever the denominator is non-zero and adding over $i,$ we get

\begin{equation}\label{coordinates}
\begin{split}
\sum_i {\big |} |T(u^n)(e_i)|^q - |T(u^n)(e_i) - T(u)(e_i)|^q - |T(u)(e_i)|^q {\big | }
\leq \\
C_\varepsilon \Delta_n + \delta(\varepsilon) \|T(u)\|_{\ell_q}^q,
\end{split}
\end{equation}
where $\Delta_n = \sum_i |T(u)(e_i)| |T(u^n)(e_i) - T(u)(e_i)|^{q-1}.$

Since the sequence $(|T(u^n)(e_i) - T(u)(e_i)|^{q-1})_{i=1}^\infty$ is
weakly null in $\ell_{\frac{q}{q-1}}$ and $T(u) \in \ell_q = (\ell_{\frac{q}{q-1}})^\ast,$ it follows that $\Delta_n \to 0$ as $n \to \infty.$
Letting $n \to \infty$ in $\eqref{coordinates}$ and then letting $\varepsilon \to 0,$ we conclude that

\begin{equation}\label{equality_T}
\|T(u^n)\|^q_{\ell_{q}} =
 \|T(u)\|^q_{\ell_q} + \|T(u^n - u)\|^q_{\ell_q} + \mathrm{o}(1).
\end{equation}

In a similar way, one shows that for a further subsequence $(u^n)_{n=1}^\infty$ of the original sequence, one has

\begin{equation}\label{equality_u}
\|u^n - u\|^p_{\ell_p} =1 - \|u\|^p_{\ell_p} + \mathrm{o}(1).
\end{equation}

Combining $\eqref{equality_T}$ and $\eqref{equality_u},$ we see that

\begin{equation}\label{final}
\|T(u^n)\|^p_{\ell_q} \leq \|T(u)\|^p_{\ell_q} + \|T\|^p \big(1 - \|u\|^p_{\ell_p} + \mathrm{o}(1)\big) + \mathrm{o}(1).
\end{equation}

Let $n \to \infty,$
it easily follows that $\|T(u)\|_{\ell_p} \geq \|T\| \|u\|_{\ell_p}.$
\end{proof}

\vspace{.5cm}

In connection with Theorem \ref{ptp} and its proof, the following comments are pertinent.
\begin{remarks}
\noindent (a) Observe that the proof makes great use of the particular norms of both the domain and the
range of the operator. In particular, the argument does not hold for renormings of either $\ell_p$ or $\ell_q$
and, indeed, the authors do not know if the result is even true for other norms.\\
\noindent (b) Note that the norm of the operator $T$ is attained at the normalization $\frac{u}{||u||}$ of the
non-zero limit of the maximizing sequence. This should be compared with several examples in Section~3.\\
\end{remarks}

An important question that remains open is whether the pair $(L_p, L_q)$ has the
weak maximizing property, for some range of exponents $(p,q)$. In the case
$p\le q$ the same argument used in the proof of Theorem 1.1 (and Proposition \ref{second-result}
below) can be adapted if one substitutes weak convergence by almost
everywhere convergence of the maximizing sequence and its image.

However, for $\ell_{p}(\Gamma)$ spaces, we have a similar version to Theorem \ref{ptp}.
Note that in the proof below we also prove a version of Pitt's Theorem (see \cite{Pitt})
for $\ell_{p}(\Gamma)$ spaces.

\begin{proposition}\label{second-result}
If $1<p<\infty$, $1\leq q<\infty$ and $\Gamma_{1}$ and $\Gamma_{2}$ are
arbitrary index sets, then the pair $\left( \ell_{p}(\Gamma_{1}),\ell_{q}(\Gamma_{2})\right) $
has the weak maximizing property.

\end{proposition}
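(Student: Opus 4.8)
The plan is to mimic the proof of Theorem \ref{ptp} as sketched above, reducing to the separable case by a support argument. Given a non-weakly null maximizing sequence $(u^n)$ for a bounded operator $T\colon \ell_p(\Gamma_1)\to\ell_q(\Gamma_2)$, I would first observe that each $u^n$ has countable support in $\Gamma_1$, and each $T(u^n)$ has countable support in $\Gamma_2$; hence $\Gamma_1':=\bigcup_n \operatorname{supp}(u^n)$ and $\Gamma_2':=\bigcup_n \operatorname{supp}(T(u^n))$ are countable. Since $\ell_p(\Gamma_1)$ is reflexive (as $1<p<\infty$), we may pass to a subsequence converging weakly to some $u$, and because the sequence is non-weakly null, $u\neq 0$; moreover $u$ is supported on $\Gamma_1'$ (a weak limit of a sequence in the closed subspace $\ell_p(\Gamma_1')$ stays in it), so after enlarging $\Gamma_1'$ to still include $\operatorname{supp}(u)$ it remains countable. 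The operator $T$ does not necessarily map $\ell_p(\Gamma_1')$ into $\ell_q(\Gamma_2')$, but we can compose with the coordinate projection $\pi\colon\ell_q(\Gamma_2)\to\ell_q(\Gamma_2'')$ onto a countable set $\Gamma_2''$ large enough to contain all the supports that matter; the point is that all the relevant quantities $\|T(u^n)\|$, $\|T(u)\|$, $\|T(u^n-u)\|$ only involve countably many coordinates, so we are effectively working with an operator between genuine separable $\ell_p$ and $\ell_q$ spaces.

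The second step is to run the same three-part estimate. Having reduced to the separable setting, reindex $\Gamma_1'$ and $\Gamma_2''$ by $\mathbb{N}$ and appeal directly to the argument in the sketch of Theorem \ref{ptp}: pass to a further subsequence so that $\big(|T(u^n)(e_i)-T(u)(e_i)|^{q-1}\big)_i$ is weakly null in $\ell_{q/(q-1)}$, apply the real-variable inequality \eqref{real} coordinatewise, sum, and use that $T(u)\in\ell_q=(\ell_{q/(q-1)})^\ast$ to kill the cross term $\Delta_n$, obtaining \eqref{equality_T}. Likewise a further subsequence gives \eqref{equality_u} in $\ell_p$. Combining these exactly as in \eqref{final} yields $\|T(u)\|_{\ell_q}\geq \|T\|\,\|u\|_{\ell_p}$, so $T$ attains its norm at $u/\|u\|$. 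This also shows, as the authors remark, that the image under a non-norm-attaining operator of a maximizing sequence must be weakly null — i.e. a Pitt-type conclusion — since if $T$ does not attain its norm then no non-weakly null maximizing sequence exists.

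The main obstacle, and the only genuinely new ingredient beyond Theorem \ref{ptp}, is verifying that the reduction to countable index sets is legitimate and that nothing is lost in the projections. Two points need care: first, that $T\colon \ell_p(\Gamma_1')\to\ell_q(\Gamma_2)$ composed with $\pi$ onto a suitable countable $\Gamma_2''$ has the same norm as needed — more precisely, one must check $\|T\|$ restricted to the relevant separable subspace is still $\|T\|$, which is not automatic, but is not actually required: what we need is only $\limsup_n\|T(u^n)\| = \|T\|$ together with the coordinatewise identities, and the final inequality $\|T(u)\|\geq \|T\|\|u\|$, combined with $\|u\|\leq 1$ and $\|T(u)\|\leq\|T\|\|u\|$, forces $\|u\|=1$ and $\|T(u)\|=\|T\|$, so $T$ itself (on the whole space) attains its norm. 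Second, one should confirm that weak convergence in $\ell_p(\Gamma_1)$ of a sequence supported on the countable set $\Gamma_1'$ restricts to weak convergence in $\ell_p(\Gamma_1')$ (true, since functionals on the subspace extend), and similarly that the weak-null extraction for the image sequence can be carried out in the separable space $\ell_{q/(q-1)}$. Once these bookkeeping points are settled, the proof is a transcription of the sketch above, and the Pitt's theorem analogue alluded to in the statement drops out of the same computation by taking $u=0$ is impossible — rather, the contrapositive of the displayed conclusion.
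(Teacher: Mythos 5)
Your separable-reduction scheme in the first and third paragraphs is sound: the supports of the $u^n$, of $u$, and of their images are countable, weak convergence passes to the closed subspace $\ell_p(\Gamma_1')$, and projecting onto a countable $\Gamma_2''$ containing all relevant supports changes none of the norms involved (in fact, since $(u^n)$ is maximizing, the restricted/projected operator automatically has the same norm as $T$, so even the point you flag as ``not automatic'' is). This reduction is essentially the same device the paper itself uses. The genuine gap is in your second paragraph, where you claim that after the reduction one can ``combine these exactly as in \eqref{final}'' for all $1<p<\infty$, $1\le q<\infty$. The passage from \eqref{equality_T} and \eqref{equality_u} to \eqref{final} raises the identity \eqref{equality_T} to the power $p/q$ and uses the subadditivity $(a+b)^{p/q}\le a^{p/q}+b^{p/q}$, which holds only when $p\le q$. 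When $p>q$ the inequality reverses and \eqref{final}, hence the conclusion $\|T(u)\|_{\ell_q}\ge\|T\|\,\|u\|_{\ell_p}$, cannot be derived this way. This is exactly why the paper splits the proof: for $1<p\le q<\infty$ it argues \emph{mutatis mutandis} as in Theorem~\ref{ptp}, while for $p>q$ it proves a Pitt-type theorem for $\ell_p(\Gamma)$ spaces -- using the same countable-support restriction you set up -- to conclude that \emph{every} bounded operator $\ell_p(\Gamma_1)\to\ell_q(\Gamma_2)$ is compact, hence norm attaining by reflexivity of the domain, so the weak maximizing property holds trivially in that range. (Relatedly, the ``version of Pitt's Theorem'' mentioned before Proposition~\ref{second-result} is this compactness statement, not the remark you make about images of maximizing sequences being weakly null.)

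The gap is easy to repair within your own framework: after reducing to countable index sets, simply invoke Theorem~\ref{ptp} as a black box (it is stated for all $1<p<\infty$, $1\le q<\infty$), noting that $(u^n)$ is a non-weakly null maximizing sequence for the restricted operator and that a norming vector for the restricted operator also norms $T$; alternatively, add the paper's compactness argument for $p>q$. As written, however, the case $p>q$ is not covered by your argument.
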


\begin{proof}
Let $T\colon\ell_{p}(\Gamma_{1}) \rightarrow\ell_{q}(\Gamma_{2})$ be a bounded linear
operator. The case $1<p\leq q<\infty$ can be obtained, \textit{mutatis mutandis}, from the proof of Theorem \ref{ptp}.

A standard argument shows that when $p>q,$ any bounded linear operator attains its norm.  To see this,
suppose that $T\colon\ell_{p}(\Gamma_{1})\rightarrow\ell_{q}(\Gamma_{2})$ is a bounded linear
operator. Repeating the standard argument, we claim that $T$ is compact. If $T$ were not compact, there would be a sequence
of unit vectors $(x^{n})_{n=1}^\infty$ in $\ell_{p}(\Gamma_{1})$ such
that the sequence $(T(x^{n}))_{n=1}^\infty$ has no limit point. Let $\Lambda_{n}%
=\{i\in\Gamma_{1} : \ x^{n,i}\neq0\}$ and $\Lambda^{\prime}=\cup\Lambda_{n}.$ Now, consider
the restriction of $T$ to $\ell_{p}(\Lambda^{\prime})$, which is essentially $\ell_{p}$
(since $\Lambda^{\prime}$ is countable). Then we would have a non-compact bounded linear
operator $T\colon\ell_{p}\rightarrow\ell _{q}$. This contradicts the classical Pitt theorem
and the proof is complete.

\end{proof}

Although the previous proof covers the case of Hilbert spaces, we think that the
following sketch of a direct proof when $T:H_1 \to H_2$, for $H_1$ and $H_2$  Hilbert spaces,  deserves to be mentioned. For that, let $(u^{n})_{n=1}^\infty$ be a
non-weakly  null maximizing sequence for $T$. Passing to a subsequence if necessary, we
may assume that $(u^{n})_{n=1}^\infty$ converges weakly to some $u\not =0.$ A simple and direct
handling of inner-product properties gives us
$$1 - \left\Vert u^{n}-u\right\Vert ^{2}=\left\Vert u\right\Vert ^{2}%
+\mathrm{o}(1)$$ and $$\left\Vert T(u^{n})\right\Vert ^{2} - \left\Vert
T(u^{n})-T(u)\right\Vert ^{2} = \left\Vert T(u)\right\Vert ^{2}+\mathrm{o}(1).$$  A
combination of the previous equalities completes the argument.

\vspace{1.5cm}
?
Compact perturbations of continuous linear operators have important connections in functional analysis (see, e.g.,  \cite{con} and  the famous \textquotedblleft
Scalar-plus-compact problem\textquotedblright\ of Lindenstrauss \cite{Lin}, solved by Argyros and Haydon \cite{Ar}).  In \cite{quae}, J. Kover showed the following
result:

\begin{proposition}
[J. Kover \cite{quae}]\label{hh} Let $H$ be a Hilbert space and $K, T:H\rightarrow H$
compact, resp. bounded linear operators. If
\[
\left\Vert T\right\Vert <\left\Vert T+K\right\Vert ,
\]
then $T+K$ is norm attaining.
\end{proposition}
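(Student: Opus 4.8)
The plan is to reduce the statement to the weak maximizing property of Hilbert spaces. Recall that every Hilbert space $H$ is linearly isometric to $\ell_{2}(\Gamma)$ for a suitable index set $\Gamma$, so Proposition~\ref{second-result} (with $p=q=2$) tells us that $H$ has the weak maximizing property. Writing $S := T+K$, it thus suffices to exhibit a \emph{non}-weakly null maximizing sequence for $S$: once we have one, Proposition~\ref{second-result} immediately yields that $S$ attains its norm.

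So I would start from an arbitrary maximizing sequence $(x^{n})_{n=1}^{\infty}$ for $S$, i.e.\ unit vectors with $\|S(x^{n})\|\to\|S\|$. The only real point to check is that $(x^{n})_{n=1}^{\infty}$ cannot converge weakly to $0$. Suppose it did. Since $K$ is compact, it carries weakly null sequences to norm null sequences, hence $\|K(x^{n})\|\to 0$, and therefore
\[
\|S(x^{n})\| \;\leq\; \|T(x^{n})\| + \|K(x^{n})\| \;\leq\; \|T\| + \|K(x^{n})\| \;\longrightarrow\; \|T\| \qquad (n\to\infty).
\]
Passing to the limit we would get $\|T+K\| = \lim_{n}\|S(x^{n})\| \leq \|T\|$, contradicting the hypothesis $\|T\| < \|T+K\|$. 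Hence $(x^{n})_{n=1}^{\infty}$ does not converge weakly to $0$; replacing it by a subsequence, we may assume it converges weakly to some $u\neq 0$, so it is a non-weakly null maximizing sequence for $S$, and the weak maximizing property of $H$ finishes the proof.

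I do not expect a genuine obstacle here: the whole argument rests on two well-known ingredients — compact operators are completely continuous, and (via Proposition~\ref{second-result}) Hilbert spaces enjoy the weak maximizing property. The only mild care needed is that $H$ is not assumed separable, which is precisely why one appeals to the $\ell_{p}(\Gamma)$ version rather than directly to Theorem~\ref{ptp}. Finally, it is worth noting that the same reasoning applies verbatim whenever the pair $(E,F)$ has the weak maximizing property and $K\colon E\to F$ is completely continuous (in particular compact); this is the source of the promised strengthening of Kover's theorem recorded in Proposition~\ref{sepG}.
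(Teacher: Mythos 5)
Your proposal is correct and follows essentially the paper's own route: the paper treats Kover's result as the special case of Proposition~\ref{sepG} (whose proof uses exactly your compactness argument to rule out weakly null maximizing sequences) combined with Proposition~\ref{second-result}, which gives the weak maximizing property for $H\cong\ell_2(\Gamma)$. The only cosmetic difference is that the paper argues by assuming $T+K$ is not norm attaining and contradicting $\|T\|<\|T+K\|$, while you directly show any maximizing sequence is non-weakly null; these are the same argument organized contrapositively.
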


In other words, if a compact perturbation of $T\in\mathcal{L}(H,H)$ has norm strictly
greater than $T$, then this perturbation is norm attaining.  We generalize this result by proving that it holds for all pairs $(E,F)$
satisfying the weak maximizing property. This result highlights the importance of the
weak maximizing property and motivates the investigation of the problem stated in the
Introduction. Proposition \ref{second-result} helps to understand that Proposition \ref{sepG} generalizes Proposition \ref{hh}.

\begin{proposition}
\label{sepG} Suppose that the pair $\left(  E,F\right)  $ has the weak maximizing
property and that $K, T:E\rightarrow F$ are compact, resp. bounded linear operators.
If
\begin{equation}
\left\Vert T\right\Vert <\left\Vert T+K\right\Vert , \label{wqqa}%
\end{equation}
then $T+K$ is norm attaining.\\

In particular, this applies when $E=\ell_{p}(\Gamma_{1})$ and
$F=\ell_{q}(\Gamma_{2})$, with $1<p<\infty$ and $1\leq q<\infty$ and $\Gamma_{1}$,
$\Gamma_{2}$ arbitrary sets of indexes.
\end{proposition}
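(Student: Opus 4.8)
The plan is to take $T+K$ with $\|T\| < \|T+K\|$, pick a maximizing sequence $(x^n)$ for $T+K$, and show that it must have a non-weakly null subsequence; then the weak maximizing property for $(E,F)$ applied to the operator $T+K$ finishes the argument. So the whole task reduces to ruling out that every maximizing sequence for $T+K$ is weakly null.

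First I would argue by contradiction: suppose $(x^n)_{n=1}^\infty$ is a maximizing sequence for $T+K$ that is weakly null (after passing to a subsequence, using that a weakly null subsequence of a weakly null sequence is weakly null, or rather: if no subsequence is non-weakly null, then by a standard argument one can extract a weakly null subsequence — here I would be slightly careful and phrase it as: assume no maximizing sequence for $T+K$ has a non-weakly null subsequence, so in particular $(x^n)$ has a weakly null subsequence, which I relabel $(x^n)$). The key point is that $K$ is compact: a compact operator maps weakly null sequences to norm null sequences, so $\|K(x^n)\| \to 0$. Hence
\[
\|T(x^n)\| \geq \|(T+K)(x^n)\| - \|K(x^n)\| \to \|T+K\| - 0 = \|T+K\| > \|T\|,
\]
so for large $n$ we would have $\|T(x^n)\| > \|T\|$ with $\|x^n\| = 1$, contradicting the definition of $\|T\|$. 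Therefore some maximizing sequence for $T+K$ has a non-weakly null subsequence, and that subsequence is still a maximizing sequence for $T+K$; since $(E,F)$ has the weak maximizing property, $T+K$ attains its norm.

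For the ``in particular'' clause, I would simply invoke Proposition~\ref{second-result}: the pair $\left(\ell_{p}(\Gamma_{1}),\ell_{q}(\Gamma_{2})\right)$ has the weak maximizing property for $1<p<\infty$, $1\leq q<\infty$ and arbitrary index sets $\Gamma_1,\Gamma_2$, so the first part of the proposition applies verbatim.

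I do not expect any serious obstacle here; the only point requiring a little care is the extraction step — making sure that ``$(E,F)$ has the weak maximizing property'' is being applied to a genuine non-weakly null \emph{maximizing} sequence for $T+K$, which is why the contradiction is set up so that failure of the conclusion forces \emph{every} maximizing sequence (and all its subsequences) to be weakly null, and compactness of $K$ then collides with $\|T\| < \|T+K\|$. The use of ``compact operators send weakly null sequences to norm null sequences'' is the one external fact invoked, and it is entirely standard.
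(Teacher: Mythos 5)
Your proposal is correct and follows essentially the same route as the paper's proof: the failure of norm attainment (equivalently, of a non-weakly null maximizing sequence) would force a weakly null maximizing sequence, compactness of $K$ then gives $\|K(x^n)\|\to 0$, and the estimate $\lim\|(T+K)(x^n)\|\le\|T\|<\|T+K\|$ yields the contradiction, after which the weak maximizing property and Proposition~\ref{second-result} finish both assertions. The only difference is cosmetic (you argue toward producing a non-weakly null maximizing sequence rather than assuming non-attainment outright), so there is nothing to add.
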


Before giving the short argument, we remark that we are not really interested in
$T$ or in $K$ but rather in their sum  $T + K.$ The reason for this is that
it may be possible that  $||T|| \geq ||T + K||$ but, for some
other bounded, resp. compact operator  $T',$ resp. $K',$  we have
$ T + K  = T' + K'$   and also that $||T'|| < ||T'+K'||.$  In such a case, if $(E,F)$ has
the weak maximizing property then  $T + K = T' + K'$   is norm-attaining.
Also, assuming that $(E,F)$ has the weak maximizing property, if $T \in \mathcal L(E,F)$
and $\epsilon > 0$ are arbitrary, it is easy to show that there is a rank one operator $K:E \to F$ 
such that $\|K\| < \epsilon$ and that $\|T\| < \|T + K\|.$ Thus any $T$ can be approximated by a norm-attaining operator whose difference with $T$ has rank one. Compare with 
the result of Lindenstrauss in  \cite[Theorem 1]{Lin-isr}, but of course the result itself was subsumed
by Theorem 15 in Stegall's paper \cite{stegall}.\\

\begin{proof}
Suppose that $T+K$ fails to be norm attaining. Since every maximizing sequence $\left(
x^{n}\right)  _{n=1}^{\infty}$ for $T+K$ is weakly null, by the compactness of $K$ we
have that $(K\left(  x^{n}\right))_{n=1}^\infty$ converges to zero. Since $\left( x^{n}\right)
_{n=1}^{\infty}$ is maximizing for $T+K$ we know that
\[
\underset{n\longrightarrow\infty}{\lim}\left\Vert T\left(  x^{n}\right)  +K\left(
x^{n}\right)  \right\Vert =\left\Vert T+K\right\Vert ,
\]
and we get
\[
\underset{n\longrightarrow\infty}{\lim}\left\Vert T\left(  x^{n}\right) +K\left(
x^{n}\right)  \right\Vert \leq\sup_{n}\left\Vert T(x^{n})\right\Vert \leq\left\Vert
T\right\Vert <\left\Vert T+K\right\Vert ,
\]
which is a contradiction.
\end{proof}

If $E$ is non-reflexive, let $\varphi \in E^\ast$ be a functional that does not attain
its norm. If $F\neq\{0\}$, choose any non-zero $y^0 \in F.$ Then the compact operator $K:E \to F, \
K(x) = \varphi(x)y^0$ is not norm attaining. Taking $T = 0,$ we see that Proposition \ref{sepG}
fails.  This argument extends to prove the following result.

\begin{corollary}
\label{uuio}A pair $\left(  E,F\right)  $ has the weak maximizing property
for some $F\neq\{0\}$ if and only if $E$ is reflexive. In particular if $E$
has the weak maximizing property then $E$ is reflexive.
\end{corollary}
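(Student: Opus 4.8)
The plan is to prove both implications via James' characterization of reflexivity, reusing the rank-one construction from the paragraph immediately preceding the corollary.

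For sufficiency, suppose $E$ is reflexive; I would take $F=\mathbb{K}$. Every bounded linear operator $T\colon E\to\mathbb{K}$ is an element of $E^\ast$, so by James' theorem each such $T$ attains its norm. Consequently the implication defining the weak maximizing property holds vacuously, and $(E,\mathbb{K})$ --- with $\mathbb{K}\neq\{0\}$ --- has the weak maximizing property.

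For necessity I would argue contrapositively: assume $E$ is not reflexive and let $F\neq\{0\}$ be arbitrary. By James' theorem there is a norm-one functional $\varphi\in E^\ast$ that does not attain its norm; fix any norm-one vector $y^0\in F$ and set $K\colon E\to F$, $K(x)=\varphi(x)y^0$. This is a rank-one (hence compact) operator with $\|K\|=\|\varphi\|=1$, and a unit vector $x$ satisfies $\|K(x)\|=1$ precisely when $|\varphi(x)|=1$; since $\varphi$ attains its norm nowhere, neither does $K$. Now pick any maximizing sequence $(x^n)_{n=1}^\infty$ of unit vectors with $|\varphi(x^n)|\to 1$: it cannot be weakly null, since weak nullity would give $\varphi(x^n)\to 0$. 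Hence $K$ admits a non-weakly null maximizing sequence but does not attain its norm, so $(E,F)$ lacks the weak maximizing property. This shows that if $(E,F)$ has the weak maximizing property for some $F\neq\{0\}$, then $E$ must be reflexive.

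The ``in particular'' clause then follows immediately: the zero space is reflexive, and if $E\neq\{0\}$ has the weak maximizing property this is the case $F=E\neq\{0\}$ of the equivalence just established. I do not anticipate a real obstacle: the argument uses only James' theorem (once per direction) together with the elementary remark that a maximizing sequence for a nonzero functional is never weakly null --- this last point being exactly what guarantees that the operator $K$ in the necessity half carries a non-weakly null maximizing sequence.
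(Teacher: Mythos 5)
Your proposal is correct and follows essentially the same route as the paper: the sufficiency half uses James' theorem with $F=\mathbb{K}$, and the necessity half is exactly the paper's rank-one construction $K(x)=\varphi(x)y^0$ from the paragraph preceding the corollary (the paper phrases it as a failure of Proposition \ref{sepG} with $T=0$, while you exhibit the non-weakly null maximizing sequence directly, which is the same underlying observation). No gaps.
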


It is also worth mentioning that the inequality in (\ref{wqqa}) needs to be
strict. For example, consider%
\[
T:\ell_{2}\rightarrow\ell_{2},\text{ }T(x)=\left(  x_{1},\frac{x_{2}}%
{2},...,\frac{(n-1)x_{n}}{n},...\right)
\]
and%
\[
K:\ell_{2}\rightarrow\ell_{2},\text{ }K(x)=\left(  -x_{1},0,0,...\right)  .
\]
It is easy to check that $T+K$ is not norm attaining and $\left\Vert T+K\right\Vert
=\left\Vert T\right\Vert =1.$

\section{Norm attaining polynomials}

The investigation of norm attaining polynomials and multilinear mappings is a subject
which has attracted the attention of several authors (see \cite{acosta, ar, AFW, aron} and
references therein).

In this section $m\geq2$ will always denote a positive integer. If $1<p,q<\infty$ and
$mq<p$, then every continuous $m$-homogeneous polynomial $P:\ell_{p}\rightarrow\ell_{q}$
is sequentially weak-to-norm continuous (see, e.g.,  \cite[Exercise 2.65]{Din} for the more
general case of multilinear mappings). Hence, since $\ell_{p}$ is reflexive, one can
prove that $P$ attains its norm. So, the analogous version of Theorem \ref{ptp} (when
$mq<p$) works for $m-$homogeneous polynomials. Hence, the relevant question is whether
Theorem \ref{ptp} is true for the case $mq\geq p$. As we will show, the answer is
negative in general.

\subsection{The case $m\geq3$}

We remark that a counterexample for the case $F=\mathbb{R}$ also works for a general $F$
over $\mathbb{R}$. So, we will restrict ourselves to the case $F=\mathbb{R}$.

If $m\geq3,$ the polynomial $P_{m}:\ell_{2}\rightarrow\mathbb{R}$ defined by%
\[
P_{m}(x)=x_{1}^{m}+mx_{1}^{m-2}%
{\displaystyle\sum\limits_{j=2}^{\infty}}
\frac{j}{j+1}x_{j}^{2}%
\]
does not attain its norm of $2\big(\frac{m-2}{m-1}\big)^{\frac{m-2}{2}}\ \ (> 1 = P_m(e_1)),$ and there is a non-weakly null maximizing sequence for $P_{m}.$
In fact, one can see that $P_{m}$ does not attain its norm
and the sequence%
\[
\left(  \sqrt{  \frac{m-2}{m-1}  }e_{1}+\sqrt{1-\frac{m-2}{m-1}  }e_{n}\right)  _{n=2}^{\infty},%
\]
where $(e_{n})_{n=1}^{\infty}$ is the canonical basis of $\ell_{2}$, is maximizing and
non-weakly null.

\subsection{The case $m=2$}

Here we still have a counterexample when $F$ is infinite dimensional. In fact, consider
$P:\ell_{2}\rightarrow\ell_{2}$ defined by
\[
P(x)=x_{1}\left(  \frac{x_{2}}{2},\frac{2x_{3}}{3},\frac{3x_{4}}%
{4},...\right)  .
\]
Note that $\left\Vert P\right\Vert =1/2$ and $P$ does not attain its
norm. On the other hand, the sequence%
\[
\left(  \frac{1}{\sqrt{2}}e_{1}+\frac{1}{\sqrt{2}}e_{n}\right)
_{n=2}^{\infty}%
\]
is maximizing and
non-weakly null.

\subsection{The case $m=2,$ $E=\ell_{2}$ and $F=\mathbb{K}$}

Now we will show that, contrary to the other two cases, the case of
scalar-valued $2$-homogeneous polynomials has a positive result:

\begin{theorem}
\label{pppf}Let $P\colon\ell_{2}\rightarrow\mathbb{K}$ be a continuous $2$-homogeneous
polynomial. Then $P$ attains its norm if, and only if, there exists a non-weakly null
maximizing sequence for $P$. In particular, the pair $\left( \ell_{2},\mathbb{K}\right) $
has the $2-$homogeneous polynomial weak maximizing property.
\end{theorem}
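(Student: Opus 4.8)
The plan is to reduce the statement to a concrete spectral analysis of the symmetric bilinear form $T_P$ associated to $P$. Write $P(x) = \langle Ax, x\rangle$ (in the complex case, use the associated bounded self-adjoint operator obtained from the symmetric bilinear form; one passes to $\mathrm{Re}\,P$ if necessary, or works with the Hermitian form $\langle Ax,\bar x\rangle$ after an appropriate reduction). The key observation is that the only nontrivial direction is: \emph{if there is a non-weakly null maximizing sequence, then $P$ attains its norm}. So let $(x^n)$ be a maximizing sequence, $\|x^n\|=1$, $|P(x^n)|\to\|P\|=:M$, and (after multiplying each $x^n$ by a scalar of modulus one and passing to a subsequence) assume $P(x^n)\to M$ and $x^n\rightharpoonup u\neq 0$ weakly in $\ell_2$.

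The heart of the argument is an orthogonality/compactness dichotomy for the operator $A$. First I would split $A = A_c + A_e$, where $A_c$ is compact and $A_e$ is the "essential" part; more concretely, since $A$ is self-adjoint, by the spectral theorem decompose $\ell_2 = \overline{\mathrm{span}}\{$eigenvectors$\} \oplus \mathcal{H}_{\mathrm{ess}}$. Writing $x^n = u + y^n$ with $y^n\rightharpoonup 0$, expand
\[
P(x^n) = P(u) + 2\,\mathrm{Re}\langle Au, y^n\rangle + \langle A y^n, y^n\rangle.
\]
Since $Au\in\ell_2$ and $y^n\rightharpoonup 0$, the cross term $\langle Au,y^n\rangle\to 0$. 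Likewise $\|x^n\|^2 = \|u\|^2 + \|y^n\|^2 + \mathrm{o}(1)$, so $\|y^n\|^2 \to 1-\|u\|^2 =: t \in [0,1)$. Thus $M = P(u) + \lim \langle A y^n, y^n\rangle$. Now I would control $\limsup \langle Ay^n, y^n\rangle$ from above: one shows that for a weakly null sequence $(y^n)$, $\limsup\langle A y^n, y^n\rangle \le \|y^n\|^2 \cdot \sup\{ \lambda : \lambda \in \sigma_{\mathrm{ess}}(A)\}$ up to lower-order terms, because the finitely many large eigenvalues of $A$ contribute nothing in the weak limit (the corresponding finite-rank spectral projection is weak-to-norm continuous and kills $y^n$). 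Hence $\limsup\langle A y^n,y^n\rangle \le t\cdot s_+$ where $s_+ := \max(\sup\sigma_{\mathrm{ess}}(A),0)$.

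From here the argument closes by a convexity/scaling comparison. We have $M \le P(u) + t\, s_+$ with $\|u\|^2 = 1-t$. If $s_+ < M$, I would argue that $P(u) \ge M - t s_+ > M(1-t) = M\|u\|^2$, which forces (after normalizing $u$) $|P(u/\|u\|)| > M = \|P\|$ if $t>0$ — a contradiction — and if $t=0$ then $x^n\to u$ in norm and $P$ attains its norm at $u$ directly. The remaining case is $s_+ = M$, i.e.\ $\|P\|$ is "attained at infinity" by the essential spectrum; here I would show this situation is actually impossible \emph{given} the non-weakly null maximizing sequence, or alternatively that $\sup\sigma_{\mathrm{ess}}(A) = M$ with equality forces $M$ to be an eigenvalue as well (since the essential spectrum of a self-adjoint operator is closed and, when it equals the operator norm $M$, one can extract a genuine eigenvector at $M$ using that $(A - M)$ is not bounded below on any finite-codimensional subspace together with the fact that $A\le M$; the top of the spectrum of a self-adjoint operator bounded above by $M$ with $\|A\|=M$ is always an eigenvalue \emph{or} there is still norm attainment via a weak limit of a maximizing sequence that one can take weakly null — but that contradicts the hypothesis). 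I expect this last case, $s_+ = M$, to be the main obstacle: disentangling when the essential-spectrum value equals the norm, and showing that in that borderline scenario the existence of a \emph{non-weakly null} maximizer is exactly what rules out the "escape to infinity" and pins the norm down to an honest eigenvector. Once that case is handled, normalizing the eigenvector gives a unit vector where $|P|=\|P\|$, completing the proof; the trivial direction (norm attainment $\Rightarrow$ a non-weakly null maximizing sequence) is immediate since a norm-one maximizer is itself a constant maximizing sequence, which is manifestly non-weakly null.
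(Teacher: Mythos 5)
Your strategy (represent $P$ by a self-adjoint operator $A$, split $x^n=u+y^n$, and compare against $\sup\sigma_{\mathrm{ess}}(A)$) is genuinely different from the paper's, which avoids spectral theory altogether: the paper passes to the linear operator $u_P\colon\ell_2\to\ell_2^{\ast}$, $u_P(x)(y)=T_P(x,y)$, uses Banach's theorem to get $\|u_P\|=\|T_P\|=\|P\|$, checks that the given sequence is also maximizing for $u_P$, and then invokes Theorem \ref{ptp} for the pair $(\ell_2,\ell_2^{\ast})$; attainment of $u_P$ gives attainment of $T_P$ and, by Banach again, of $P$. As written, however, your argument has a genuine gap exactly where you anticipate trouble: the case $s_+=\sup\sigma_{\mathrm{ess}}(A)=M=\|P\|$, which is precisely the case carrying the content of the theorem. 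Neither of your suggested escapes works: it is false that $\sup\sigma_{\mathrm{ess}}(A)=\|A\|=M$ forces $M$ to be an eigenvalue (take $A$ diagonal with entries $1-1/n$), and the remark that ``one can take a weakly null maximizing sequence --- but that contradicts the hypothesis'' contradicts nothing, since the hypothesis is only that \emph{some} maximizing sequence is non-weakly null, not that all of them are. A secondary issue is your opening reduction: for $\mathbb{K}=\mathbb{C}$ a $2$-homogeneous polynomial is $B(x,x)$ with $B$ bilinear, not sesquilinear, so it is not of the form $\langle Ax,x\rangle$ with $A$ self-adjoint; one must first multiply $x^n$ by unimodular scalars (which preserves non-weak-nullness) so that $P(x^n)\to M>0$ and then work with $\operatorname{Re}P$ on the realification, a step you only gesture at.

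The gap is fillable, and in a way that makes your essential-spectrum dichotomy unnecessary. After the phase/sign normalization, $P(x^n)\to M$ with $\|x^n\|=1$ forces $\sup_{\|x\|=1}\langle Ax,x\rangle=M$, i.e.\ $M-A\ge 0$. Then $\langle (M-A)x^n,x^n\rangle\to 0$ gives $\|(M-A)^{1/2}x^n\|\to 0$, hence $(M-A)x^n\to 0$ in norm; since (a suitable subsequence of) the non-weakly null sequence converges weakly to some $u\neq 0$, it follows that $(M-A)u=0$, so $u/\|u\|$ is a unit eigenvector and $P$ attains its norm there. This single positivity argument subsumes both of your cases $s_+<M$ and $s_+=M$, and it also recovers the paper's Remark (b): the norm is attained at the normalized weak limit of the maximizing sequence. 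With that lemma inserted (and the complex case reduced as above), your approach becomes a correct, self-contained alternative to the paper's reduction to Theorem \ref{ptp}; what the paper's route buys is that it needs no spectral theory and treats $\mathbb{R}$ and $\mathbb{C}$ simultaneously, while yours gives more quantitative information about where the norm is attained.
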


\begin{proof}
As usual, just one implication needs a proof. Let $(x^{n})_{n=1}^{\infty}$ be
a non-weakly null maximizing sequence for $P.$ Let $u_{P}:\ell_{2}%
\rightarrow\ell_{2}^{\ast}$ be the linear operator defined by
\[
u_{P}(x)(y)=T_{P}(x,y).
\]
It is well-known that%
\[
\left\Vert u_{P}\right\Vert =\left\Vert T_{P}\right\Vert
\]
and since we are dealing with Hilbert spaces, an old result due to Banach  (see \cite{Bana} or  \cite[Proposition 1.44]{Din} for an easy available reference) asserts that the norm of any homogeneous polynomial coincides with the norm of its associated symmetric multilinear operator (see also  \cite[Theorem 2.1]{Sa} for a modern constructive approach), i.e.,
\[
\left\Vert P\right\Vert =\left\Vert T_{P}\right\Vert .
\]
So we have%
\[
\left\Vert P\right\Vert =\left\Vert T_{P}\right\Vert =\left\Vert u_{P}\right\Vert
\geq\left\Vert u_{P}(x^{n})\right\Vert \geq\left\Vert T_{P}(x^{n},x^{n})\right\Vert
=\left\Vert P(x^{n})\right\Vert
\]
and since $\left\Vert P(x^{n})\right\Vert$ converges to $\left\Vert P\right\Vert$ as $n$
goes to infinity, we conclude that
\[
\lim_{n\rightarrow\infty}\left\Vert u_{P}(x^{n})\right\Vert =\left\Vert
u_{P}\right\Vert .
\]
So, by Theorem \ref{ptp} we have that $u_{P}:\ell_{2}\rightarrow\ell _{2}^{\ast}$ attains
its norm at some norm-one vector $x^{0}\in\ell_{2}$.
Hence%
\[
\left\Vert u_{P}(x^{0})\right\Vert =\left\Vert u_{P}\right\Vert =\left\Vert
T_{P}\right\Vert .
\]
But, since $u_{P}(x^{0}):\ell_{2}\rightarrow\mathbb{K}$ is a continuous linear
functional, $u_{P}(x^{0})$ also attains its norm at some norm-one vector
$y^{0}\in\ell_{2}$. Therefore
\[
|T_{P}(x^{0},y^{0})| =|u_{P}(x^{0}%
)(y^{0})| =\left\Vert u_{P}(x^{0})\right\Vert =\left\Vert T_{P}\right\Vert
\]
and so $T_{P}:\ell_{2}\times\ell_{2}\rightarrow\mathbb{K}$ attains its norm at
$(x^{0},y^{0})\in\ell_{2}\times\ell_{2}$. Finally, thanks again to Banach's result (see \cite{Bana}) we conclude that $P$ attains its norm.
\end{proof}

The results in this section show that things work nicely, from the point of view of non-weakly null
maximizing sequences and norm attainment, for scalar-valued $2$-homogeneous polynomials. However
this is no longer the case 
for vector-valued $2$-homogeneous polynomials and for scalar-valued $m$-homogeneous polynomials with $m\geq 3$. This illustrates a pattern in the attempts to generalize linear results
to the polynomial setting. For example, in \cite[Section 4]{BMP} the validity of expected results resists one degree of homogeneity more: things work nicely for $2$-homogeneous polynomials and for scalar-valued $3$-homogeneous polynomials but deteriorate for vector-valued $3$-homogeneous polynomials and for $m$-homogeneous polynomials with $m\geq  4$. Further information is found, for example, in 
\cite{aron}.

\subsection{Completely continuous perturbations of homogeneous polynomials}

An $m$-homogeneous polynomial is said to be {\em completely continuous} when it is weak-to-norm continuous. For example, finite-type polynomials are always completely continuous.

>From the proof of Proposition \ref{sepG} one can check that a polynomial version is
possible if we replace the compact perturbation by a perturbation by a completely
continuous polynomial. More precisely, if the pair $(E,F)$ has the $m$-homogeneous
polynomial weak maximizing property, the following holds: For any continuous
$m$-homogeneous polynomial $P:E\rightarrow F$ and any  completely continuous
$m$-homogeneous polynomial $K:E\rightarrow F$ such that
\[
\left\Vert P\right\Vert <\left\Vert P+K\right\Vert ,
\]
the polynomial $P+K$ is norm attaining.

Since the pair $(\ell_{2},\mathbb{K})$ has the $2$-homogeneous polynomial weak maximizing
property (see Theorem \ref{pppf}), we see that  completely continuous perturbations of
$2$-homogeneous polynomials increasing their norms provide norm attaining polynomials. That is,
if $P:\ell_{2}\rightarrow\mathbb{K}$ is a continuous $2$-homogeneous polynomial,
$K:\ell_{2}\rightarrow\mathbb{K}$ is a completely continuous $2$-homogeneous polynomial and%
\[
\left\Vert P\right\Vert <\left\Vert P+K\right\Vert ,
\]
then $P+K$ is norm attaining.

\bigskip

\bigskip

\bigskip

We conclude with the following questions:

\begin{questions}
\begin{enumerate}
\item Are there  infinite dimensional Banach spaces $E, F_1$ and  $F_2$ for
	which the pair $(E,F_1)$ has the weak maximizing property but the pair $(E,F_2)$
fails to have it? In particular, if $(E,F)$ has the weak maximizing property, does
$(E,F')$ also have it for an isomorphic copy of $F?$ More generally, if the pair $(E,F)$ has
the weak maximizing property and if $E', F'$ are
isomorphic to $E, F,$ respectively, does the pair $(E',F')$ have the same property?\\
\item What is the relation between $E$ having the weak maximizing property and $E$ being reflexive? We conjecture that the two properties are not equivalent.
\end{enumerate}
\end{questions}

\vspace{.5cm}

\textbf{Acknowledgment.} We thank Geraldo Botelho for helpful conversations
and Mar\'ia Acosta and Lawrence Harris for providing us the references \cite{Bana} and
\cite{Sa}.

\end{document}